\newtheorem{theorem}{Theorem}[section]
\newtheorem{lemma}[theorem]{Lemma}
\theoremstyle{definition}
\newtheorem{definition}[theorem]{Definition}
\newtheorem{fact}[theorem]{Fact}
\theoremstyle{remark}
\newtheorem{remark}[theorem]{Remark}
\numberwithin{equation}{section}
\newcommand{\abs}[1]{\lvert#1\rvert}
\newcommand{\diam}[0]{\normalfont \text{diam}}
\newcommand{\spn}[0]{\normalfont \text{span}}
\newcommand{\rn}[0]{\normalfont \text{rn}}
\newcommand{\tn}[0]{\normalfont \text{tn}}
\newcommand{\rl}[0]{\normalfont \text{RL}}
\begin{document}

\title{The Radio Number of Grid Graphs}

\author{Tian-Shun Allan Jiang}
\dedicatory{
The North Carolina School of Science and Mathematics}
\date{January 6, 2014}




\keywords{Graph coloring, radio number, upper traceable number, grid graph}


\begin{abstract}
\begingroup
\fontsize{12pt}{12pt}\selectfont
The radio number problem uses a graph-theoretical model to simulate optimal frequency assignments on wireless networks. A radio labeling of a connected graph $G$ is a function $f:V(G) \to \mathbb Z_{0}^+$ 
such that for every pair of vertices $u,v \in V(G)$, we have $\abs{f(u)-f(v)} \ge \diam(G) + 1 - d(u,v)$ where $\diam(G)$ denotes the diameter of $G$ and $d(u,v)$ the distance between vertices $u$ and $v$. Let $\spn(f)$ be the difference between the greatest label and least label assigned to $V(G)$. Then, the \textit{radio number} of a graph $\rn(G)$ is defined as the minimum value of $\spn(f)$ over all radio labelings of $G$. So far, there have been few results on the radio number of the grid graph: In 2009 Calles and Gomez 
gave an upper and lower bound for square grids, and in 2008 Flores and Lewis 
were unable to completely determine the radio number of the ladder graph (a 2 by $n$ grid).
In this paper, we completely determine the radio number of the grid graph $G_{a,b}$ for $a,b>2$, characterizing three subcases of the problem and providing a closed-form solution to each. These results have implications in the optimization of radio frequency assignment in wireless networks such as cell towers and environmental sensors.
\endgroup
\end{abstract}
\maketitle

\newpage

\section{Introduction}\label{sec:intro}
\subsection{Motivation} Wireless communication pervades modern society, but many of us have experienced problems with wireless networks due to interference -- fuzzy radio reception, choppy cellphone calls, or poor Wi-Fi connections. As the radio spectrum becomes more crowded due to the proliferation of wireless technologies, network planners are searching for more efficient ways to allocate broadcast frequencies. An effective method of frequency coordination mitigates inter-channel interference to maximize the efficiency of wireless communication. 
The efficient usage of wireless frequency spectra helps reduce crowding on heavily used frequency bands such as the 2.4 GHz band, which is used for wireless internet, cell phones, microwaves, and Bluetooth \cite{wireless-phone-internet, wireless-microwave-bluetooth}.
To approach this problem, we used a graph-theoretical model known as  radio labeling to optimize wireless frequency assignment. 
In this study, we research grid-like networks and arrive at a complete solution to the radio number of grid graphs.

\subsection{Background}

Radio labeling is motivated by the channel assignment problem introduced by Hale \cite{hale original problem}, who defined a whole class of graph-labeling problems designed to optimize wireless frequency assignment. Suppose a wireless network is composed of a set of radio stations, all separated by some geographic distance. If we treat each radio station as a vertex and assign an edge between two stations if they are geographically ``close" to each other, then the radio labeling of the graph is closely related to the network's optimal frequency assignment \cite{hale original problem}. 
Finding the radio number of a network's graph is theoretically equivalent to minimizing 
the spectral range of required frequencies used by the network. 

Radio labelings are a special case of the $L(h,k)$-labeling problem, which has been studied extensively in the past decade \cite{L(2-1),a,c,b,e,f,g} with many different approaches, including graph theory and combinatorics, simulated annealing \cite{simulated-annealing2,simulated-annealing1}, genetic algorithms \cite{genetic-algorithms}, tabu search \cite{tabu-search}, and neural networks \cite{neural-network}. Previously, Calamoneri \cite{L(h-k)grids} studied the $L(h,k)$ labeling on grid graphs, but was unable to solve the radio number subcase. The problem is also closely related to the upper traceable number \cite{NP-results}, a type of metric travelling salesman problem (TSP) which is an NP-hard problem \cite{TSP-hardness} in combinatorial optimization with applications in operations research \cite{tsp-operations-research} and theoretical computer science \cite{tsp-cs}.

\subsection{Definitions}
A \textit{radio labeling} labels the vertices of a graph with nonnegative integers such that for any two vertices, the smaller the distance between the vertices, the greater the required difference in label. Then, the \textit{radio number} of a graph is the value of the smallest possible span of integer labels on a radio labeling. 
Formally, let $G$ be a simple connected graph with vertex set $V(G)$, let $d(u,v)$ denote the distance\footnote{Also called the \textit{geodesic distance}, the distance between two vertices of a graph is number of edges in the shortest path connecting them.} between any pair of distinct vertices $u,v \in V(G)$, and let $\diam(G)$ denote the diameter of $G$. Then Li \cite{m-ary trees} gives the following definition:
A radio labeling of a connected graph $G$ is a mapping $f : V(G) \to \mathbb Z_{0}^+$ such that $\abs{f(u) - f(v)} \ge \diam(G) +1 - d(u,v)$ for each pair of distinct vertices $u,v \in V(G)$. The span of $f$ is defined as $\max_{u,v \in V(G)} \abs{f(u) - f(v)}$, and the radio number of $G$, $\rn(G)$, is the minimum span of a radio labeling of $G$.

\subsection{Previous Work}
Since its introduction by Hale in 1980, Chartrand et al. \cite{chartrand-orig, chartrand-final, chartrand-cycle} first studied the radio number of path and cycle graphs, giving upper and lower bounds for their values. Later in 2005, Liu and Zhu \cite{pathcycle} determined their exact values. In the following years, Liu \cite{trees} found upper and lower bounds for the radio number of trees, Khennoufa and Togni \cite{hypercubes} used binary Gray codes to determine the radio number of hypercubes, and Li et al. \cite{m-ary trees} determined the radio number of m-ary trees. Since it is computationally complex to calculate the radio number on general graphs (known to be NP-complete for certain graphs $G$ \cite{NP-results}), we direct our attention to the radio number on special classes of graphs. We chose to investigate the grid graph for its similarity to real-world applications. Before this work, Calles and Gomez \cite{unpublished-grid} had the most recent progress on this problem, establishing upper and lower bounds for $\rn(G)$ for grids of equal $x$ and $y$ dimensions.


\subsection{Our Results}
In this paper, we completely determine $\rn(G)$ on any grid graph\footnote{Formally defined as the Cartesian product between two path graphs of length $a$ and $b$.} $G_{a,b}$ where $a,b>2$.

In Section \ref{sec:prelim} of this paper, we introduce some definitions and fundamental observations.
In Sections \ref{sec:utn} and \ref{sec:bump}, we investigate the \textit{upper traceable number} and \textit{bump} on grid graph $G_{a,b}$, two important metrics in determining $\rn(G_{a,b})$. In Section \ref{sec:grid}, we determine the value of $\rn(G_{a,b})$ with $a,b > 2$.

Our methodology not only yields important results on the grid graph, but also represents an improved approach to the problem by simplifying the problem to an equivalent one of finding appropriate vertex orderings $s^*$.



\section{Preliminaries}\label{sec:prelim}
For ease of notation, we refer to $\diam(G)$ as $D$, the distance between vertices $u$ and $v$ as $d(u, v)$, and a radio labeling on $G$ as $f$. We begin by observing the following fact:

\begin{fact}\label{fact:no repeat}
In a radio labeling, no two vertices may have the same label since $d(u,v) \le D$.
\end{fact}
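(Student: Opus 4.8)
The plan is to argue by contradiction, using only the defining inequality of a radio labeling together with the elementary observation that the diameter bounds every pairwise distance from above. First I would suppose, toward a contradiction, that there exist two \emph{distinct} vertices $u,v \in V(G)$ with $f(u) = f(v)$, so that $\abs{f(u)-f(v)} = 0$.

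Next I would invoke the radio labeling condition for this pair. Since $u \ne v$, the definition guarantees $\abs{f(u)-f(v)} \ge D + 1 - d(u,v)$. The key step is to bound the right-hand side strictly below by a positive quantity. Because $D = \diam(G)$ is by definition the maximum of $d(u,v)$ taken over all pairs of vertices, every pairwise distance satisfies $d(u,v) \le D$, which is exactly the inequality cited in the statement. Substituting this gives $D + 1 - d(u,v) \ge D + 1 - D = 1$, and hence $\abs{f(u)-f(v)} \ge 1$. This contradicts $\abs{f(u)-f(v)} = 0$, so no two distinct vertices can share a label.

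I expect no real obstacle here, as the argument is essentially immediate from the definitions. The only point that merits a moment of care is recognizing that the strict positivity $D + 1 - d(u,v) \ge 1 > 0$ is precisely what rules out a repeated label, so the hypothesis $d(u,v) \le D$ quoted in the statement is exactly the fact doing the work.
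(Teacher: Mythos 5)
Your proof is correct and follows exactly the reasoning the paper intends: the clause ``since $d(u,v) \le D$'' in the statement is precisely your key step, forcing $\abs{f(u)-f(v)} \ge D+1-d(u,v) \ge 1$ for distinct $u,v$, which rules out equal labels. The paper offers no further proof because the argument is immediate from the definitions, just as you observe.
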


By Fact \ref{fact:no repeat}, given any radio labeling $f$, there exists a unique ordering of vertices by increasing label. Let this unique ordering associated with $f$ be $s_f: u_{1}, u_{2}, \ldots u_{n}$, where $n = \abs{V(G)}$
, such that $$f(u_1)<f(u_2)<\ldots<f(u_n).$$ 
Notice that there exist multiple $f$'s with ordering $s_f$ (i.e. there is a many-to-one relationship between $f$ and $s$). 
Conversely, when given an ordering $s$, we are interested in the set of corresponding $f$ with minimum span. 

For convenience, let $f_i = f(u_{i+1}) - f(u_{i})$ and $d_i = d(u_{i+1},u_{i})$. Then, by the definition of a radio labeling we have
$$f_i \ge D + 1 - d_i.$$ 
By taking the sum of all inequalities for $1 \le i \le n-1$, we have
$$\spn(f) = \sum_{i=1}^{n-1}f_i \ge (n-1)(D+1) - \sum_{i=1}^{n-1}d_i.$$
Now, let $\rl(G)$ be the set of radio labelings on $G$.
Since the radio number is the minimum span of a labeling, we get the following Lemma by Liu \cite{trees}:
\begin{lemma}
The lower bound of the radio number on any connected graph $G$ is 
$$\rn(G) \ge (n-1)(D+1) - \max_{f \in \rl(G)}\left(\sum_{i=1}^{n-1} d_i \right).$$
\end{lemma}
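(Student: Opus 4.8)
The plan is to establish the claimed bound by combining the per-labeling inequality already derived above with the fact that the radio number is a minimum over all labelings. First I would fix an arbitrary radio labeling $f \in \rl(G)$ with its associated ordering $s_f : u_1, u_2, \ldots, u_n$. Since the labels satisfy $f(u_1) < f(u_2) < \cdots < f(u_n)$, the span telescopes along this ordering: $\spn(f) = f(u_n) - f(u_1) = \sum_{i=1}^{n-1} f_i$. Applying the radio condition $f_i \ge D + 1 - d_i$ termwise and summing then gives $\spn(f) \ge (n-1)(D+1) - \sum_{i=1}^{n-1} d_i$, exactly as recorded in the preamble.

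The one point requiring care is the order of the quantifiers. The inequality above involves the distance sum $\sum_{i=1}^{n-1} d_i$ computed along the particular ordering $s_f$, so the bound still depends on $f$. To obtain a single lower bound valid for every labeling, I would weaken the right-hand side by replacing this distance sum with its largest possible value: since $f$ itself lies in $\rl(G)$, we have $\sum_{i=1}^{n-1} d_i \le \max_{g \in \rl(G)} \left( \sum_{i=1}^{n-1} d_i \right)$, and hence $\spn(f) \ge (n-1)(D+1) - \max_{g \in \rl(G)} \left( \sum_{i=1}^{n-1} d_i \right)$. The crucial gain is that the right-hand side no longer depends on $f$.

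Finally, because this bound holds for every $f \in \rl(G)$, it holds in particular for a labeling achieving the minimum span. Taking the minimum of the left-hand side over all radio labelings therefore yields $\rn(G) = \min_{f \in \rl(G)} \spn(f) \ge (n-1)(D+1) - \max_{g \in \rl(G)} \left( \sum_{i=1}^{n-1} d_i \right)$, which is the desired statement.

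I do not anticipate a genuine obstacle here, since the derivation is essentially algebraic once Fact \ref{fact:no repeat} guarantees that each labeling induces a well-defined increasing ordering. The only place to remain vigilant is the direction of the maximization: because the distance sum enters with a negative sign, replacing it by its maximum is what preserves (rather than reverses) the lower bound, and that maximum must range over all of $\rl(G)$ rather than the single fixed $f$ — a subtlety left implicit in the per-labeling sketch preceding the statement.
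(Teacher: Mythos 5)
Your proposal is correct and follows essentially the same route as the paper: the paper derives the per-labeling inequality $\spn(f) \ge (n-1)(D+1) - \sum_{i=1}^{n-1} d_i$ by summing the radio condition along the ordering $s_f$, and then obtains the lemma by passing to the minimum span while replacing the distance sum with its maximum over $\rl(G)$. Your only addition is making the quantifier order explicit, which the paper leaves implicit but which changes nothing in substance.
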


To simplify notation, the $\max$ and $\min$ functions are always maximizing or minimizing an expression over the set $\rl(G)$.
For completeness, we also present an upper bound on the radio number on any connected graph $G$ in Lemma \ref{lem:upper bound}.
\begin{lemma}\label{lem:upper bound}
$\rn(G) = O(n^2)$. In particular,
$$\rn(G) \le D \cdot (n-1) \le (n-1)^2.$$
\end{lemma}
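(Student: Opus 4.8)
The plan is to prove the bound by exhibiting an explicit radio labeling whose span meets the claimed upper bound; since $\rn(G)$ is by definition the minimum span over all radio labelings, any single valid labeling already furnishes an upper bound on $\rn(G)$. First I would fix an arbitrary ordering $u_1, u_2, \ldots, u_n$ of $V(G)$ and define $f(u_i) = (i-1)D$ for each $i$. The key observation is that for any two distinct vertices $u_i, u_j$ with $i < j$, the induced label gap is $\abs{f(u_i)-f(u_j)} = (j-i)D \ge D$, so consecutive spacing of exactly $D$ forces every pairwise gap to be at least $D$.

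Next I would verify that this $f$ is a genuine radio labeling. Because $G$ is simple and connected, any two distinct vertices satisfy $d(u,v) \ge 1$, so the radio condition $\abs{f(u)-f(v)} \ge D + 1 - d(u,v)$ imposes a requirement whose right-hand side is at most $D$. Since every pairwise label gap under $f$ is at least $D$, all of these constraints are satisfied simultaneously, and $f$ is valid. Its span is $f(u_n) - f(u_1) = (n-1)D$, which establishes the first inequality $\rn(G) \le D(n-1)$.

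Finally, to obtain the second inequality I would bound the diameter itself. A shortest path realizing the diameter in a connected graph on $n$ vertices is a simple path, hence traverses at most $n$ vertices and therefore at most $n-1$ edges, giving $D \le n-1$. Substituting this into the first bound yields $D(n-1) \le (n-1)^2 = O(n^2)$, completing the chain of inequalities in the statement.

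I do not anticipate a serious obstacle here, as the construction is direct and the verification routine. The only point requiring genuine care is ensuring that the right-hand side of the radio inequality never exceeds $D$; this rests entirely on the fact that distinct vertices of a simple graph have distance at least $1$, so the uniform spacing of $D$ between consecutive labels is exactly what is needed to satisfy every constraint at once.
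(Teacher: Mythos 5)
Your proof is correct and follows essentially the same approach as the paper: both define $f(v_i) = D\cdot(i-1)$ on an arbitrary vertex ordering, observe that the resulting pairwise gaps of at least $D$ satisfy every radio constraint (since $d(u,v)\ge 1$ makes each requirement at most $D$), and finish with $D \le n-1$. You merely spell out the verification that the paper leaves implicit.
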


\begin{proof}
Since $D \le n-1 \implies D \cdot (n-1) \le (n-1)^2$, it remains to be shown that $\rn(G) \le D \cdot (n-1).$

If the vertices of $G$ are in any order $v_{1},v_{2},\ldots,v_{n}$, let $f(v_{i})=D \cdot (i-1)$ for $1 \le i \le n-1$. This is a radio labeling it has span $D \cdot (n-1)$ as specified.
\end{proof}

We now introduce some terminology to more accurately describe $\rn(G)$:
\begin{definition}
As given by Li \cite{m-ary trees}, for two consecutive vertices $u_{i},u_{i+1}$ in an ordering $s$, define the \textit{bump} between $u_{i+1}$ and $u_{i}$ as 
$$b(u_{i+1},u_{i}) := f_i - (D + 1 - d_i).$$
\end{definition}

For convenience, we denote $b(u_{i+1},u_{i})$ by $b_i$.

\begin{definition}\label{tightness neighbor}
A vertex $u_i$'s set of \textit{tightness neighbors} $\tn(u_i)$ consists of all vertices $u_j$ such that 
$$\abs{f(u_i)-f(u_j)} = D+1 - d(u_i,u_j).$$
\end{definition}

\begin{remark}\label{rem:additional value}
Notice that $f_i = D+1 - d_i + b_i$. This shows that $b_i > 0$ if and only if  $u_{i-1} \not\in \tn(u_i)$. 
\end{remark}

Using Remark \ref{rem:additional value} and summing up over $1 \le i \le n-1$, we get
\begin{lemma}\label{lem:rn is d-b}
Given a simple, connected graph $G$,
$$\rn(G) = (n-1)(D+1) - \max\left(\sum_{i=1}^{n-1}\left(d_i-b_i\right)\right) \ge (n-1)(D+1) -  \left(\max\sum_{i=1}^{n-1}d_i - \min\sum_{i=1}^{n-1}b_i\right).$$
\end{lemma}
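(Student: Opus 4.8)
The plan is to start directly from Remark \ref{rem:additional value}, which rewrites each consecutive label gap as $f_i = (D+1-d_i) + b_i$. Summing this identity over $1 \le i \le n-1$ and recalling that $\spn(f) = \sum_{i=1}^{n-1} f_i$, I would obtain the key identity
$$\spn(f) = (n-1)(D+1) - \sum_{i=1}^{n-1}(d_i - b_i),$$
which expresses the span of any radio labeling $f$ as the constant $(n-1)(D+1)$ minus a quantity depending on $f$ through its associated distances $d_i$ and bumps $b_i$.

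For the equality, I would minimize over all $f \in \rl(G)$. Since $(n-1)(D+1)$ is constant, minimizing $\spn(f)$ is equivalent to maximizing $\sum_{i=1}^{n-1}(d_i - b_i)$, giving
$$\rn(G) = \min \spn(f) = (n-1)(D+1) - \max\left(\sum_{i=1}^{n-1}(d_i - b_i)\right),$$
which is exactly the asserted equality.

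For the inequality, I would decouple the two sums. Fixing any $f \in \rl(G)$ and writing $\sum(d_i-b_i) = \sum d_i - \sum b_i$, I use that $\sum d_i \le \max\sum d_i$ and $\sum b_i \ge \min\sum b_i$ (both extrema taken over the common set $\rl(G)$) to conclude
$$\sum_{i=1}^{n-1}(d_i - b_i) \le \max\sum_{i=1}^{n-1} d_i - \min\sum_{i=1}^{n-1} b_i.$$
Because the right-hand side is independent of $f$, I may take the maximum over $f$ of the left-hand side without changing the bound, yielding $\max(\sum(d_i-b_i)) \le \max\sum d_i - \min\sum b_i$. Substituting this into the equality completes the proof.

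The only subtle point — and the reason the second relation is an inequality rather than an equality — is that the labeling maximizing $\sum d_i$ need not coincide with the one minimizing $\sum b_i$, so the general bound $\max(A-B) \le \max A - \min B$ cannot in general be sharpened to equality; any gap is precisely the cost of having to optimize both sums with a single ordering. I expect no genuine obstacle here, only the need to state carefully that all extrema range over the common set $\rl(G)$.
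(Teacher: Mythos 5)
Your proposal is correct and takes essentially the same route as the paper: the paper's entire proof consists of summing the identity $f_i = (D+1-d_i)+b_i$ from Remark \ref{rem:additional value} over $1 \le i \le n-1$ and then minimizing the span over $\rl(G)$, which is exactly your argument. Your explicit justification of the decoupling step $\max\left(\sum d_i - \sum b_i\right) \le \max\sum d_i - \min\sum b_i$ is the (implicit) content of the paper's inequality, so nothing is missing.
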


To find $\max\left(\sum_{i=1}^{n-1}\left(d_i-b_i\right)\right)$, we study $\max\left(\sum_{i=1}^{n-1}d_i\right)$, an NP-hard problem \cite{NP-hard-utn} known as the \textit{upper traceable number} $t^+(G)$ in Section \ref{sec:utn} and the total bump $\min\left(\sum_{i=1}^{n-1}b_i\right)$ in Section \ref{sec:bump}.


\section{Upper Traceable Number of Grid Graphs}\label{sec:utn}
Let $G_{a,b}$ denote the grid graph with $a$ vertices in the $x$-direction and $b$ vertices in the $y$-direction.
For ease of notation, given an ordering $s$, let $d(s) = \sum_{i=1}^{n-1}d_i$. We establish the following theorem on $t^+(G_{a,b})$:

\begin{theorem}\label{thm:utn grid}
On the grid graph $G_{a,b}$ with $a,b>2$,
$$
t^+(G_{a,b}) =
\begin{cases}
\dfrac{a^2b+b^2a}{2}-3, & \normalfont\text{if }a,b\text{ are even, }
\\[1em]
\dfrac{a^2b+b^2a-a}{2}-1, & \normalfont\text{if }a\text{ is even, }b \text{ is odd}
\\[1em]
\dfrac{a^2b+b^2a-a-b}{2}-1, & \normalfont\text{if }a,b\text{ are odd, }
\end{cases}
$$
Clearly, the case $a$-even $b$-odd is equivalent to $a$-odd $b$-even if we reorient the grid $G_{a,b}$ to $G_{b,a}$.
\end{theorem}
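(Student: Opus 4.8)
The plan is to exploit the fact that distance in a grid graph is the Manhattan distance: writing each vertex as $(x,y)$ with $1\le x\le a$ and $1\le y\le b$, we have $d(u,v)=\abs{x_u-x_v}+\abs{y_u-y_v}$. Hence for any ordering $s$ the quantity $d(s)=\sum_{i=1}^{n-1}d_i$ splits as $d(s)=T_x(s)+T_y(s)$, where $T_x(s)=\sum_{i=1}^{n-1}\abs{x_{i+1}-x_i}$ and $T_y$ is defined analogously. The sequence of $x$-coordinates visited by $s$ is an arrangement of the multiset $M_x$ in which each value $1,\dots,a$ occurs exactly $b$ times, and symmetrically $M_y$ for the $y$-coordinates. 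So computing $t^+(G_{a,b})=\max_s\bigl(T_x(s)+T_y(s)\bigr)$ reduces to understanding how large $T_x$ and $T_y$ can individually be made, and—crucially—whether both can be made large by a \emph{single} ordering.

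For the upper bound I would count threshold crossings. For each $1\le k\le a-1$ record for every vertex the bit $[x_i>k]$; then $T_x(s)=\sum_{k=1}^{a-1}c_k$, where $c_k$ is the number of indices $i$ at which this bit changes. A binary string with $L$ zeros and $R$ ones has at most $2\min(L,R)-[L=R]$ such switches, so with $L=bk$ and $R=b(a-k)$ I get $c_k\le 2\min(bk,b(a-k))-[a=2k]$. Summing over $k$ (and likewise for $y$) yields the coordinatewise maxima $\mathrm{maxTV}(M_x)$ and $\mathrm{maxTV}(M_y)$; a short evaluation of $\sum_{k}\min(k,a-k)$ gives $\tfrac{a^2b}{2}-1$ (resp. $\tfrac{(a^2-1)b}{2}$) when $a$ is even (resp. odd), and symmetrically for $b$. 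Adding the two coordinate bounds produces a first, naive upper bound $U$ on $t^+(G_{a,b})$ that equals the claimed value in the even–odd case but is one too large in the even–even and odd–odd cases.

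The heart of the argument—and the step I expect to be the main obstacle—is showing this extra $-1$ is forced, i.e. that $T_x$ and $T_y$ cannot both attain their coordinatewise maxima simultaneously. I would analyze which orderings meet the per-threshold bounds. When $a$ is odd, attaining $\mathrm{maxTV}(M_x)$ requires the majority side to sit at both ends of the string for the two central thresholds $k=\tfrac{a\pm1}{2}$, which forces both endpoints of $s$ to lie in the middle column $x=\tfrac{a+1}{2}$; symmetrically, $b$ odd forces both endpoints into the middle row. In the odd–odd case these two demands would place both endpoints at the single center vertex, impossible since the endpoints are distinct—so at least one coordinate falls short by $1$. When $a$ is even, attaining the $-1$ at the balanced threshold $k=a/2$ requires the bit $[x_i>a/2]$ to alternate at \emph{every} step, and likewise $b$ even forces $[y_i>b/2]$ to alternate at every step; but then the pair $([x_i>a/2],[y_i>b/2])$ flips in both coordinates each step and visits only two of the four ``quadrants'', covering $\tfrac{ab}{2}<ab$ vertices—a contradiction. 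Hence one balanced threshold must lose an additional crossing, again costing exactly $1$. I would also check that the even–odd demands (endpoints in opposite $x$-halves, both in the middle row) are mutually consistent, explaining why no correction appears there.

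Finally I would exhibit, in each of the three cases, an explicit ordering $s^{*}$ meeting the corrected bound, thereby establishing the matching lower bound and confirming the formula. The construction is a zigzag that at each step jumps across the center in both coordinates—alternating between opposite halves and quadrants while respecting the endpoint constraints identified above—and the value $T_x(s^{*})+T_y(s^{*})$ is verified directly through the switch count $\sum_k c_k$. Checking that $s^{*}$ saturates every threshold except the single unavoidable deficit in the even–even and odd–odd cases is the most delicate bookkeeping, but it also yields the explicit ordering $s^{*}$ reused in later sections to pin down $\rn(G_{a,b})$.
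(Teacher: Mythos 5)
Your upper-bound machinery is sound and takes a genuinely different route from the paper's. Where the paper bounds each coordinate sum $\max(d_x(s))$ by a rearrangement-inequality argument on multisets of $\pm 1$ coefficients together with a $+/-$ sign labeling of the grid, you bound it by level sets: writing $T_x(s)=\sum_{k=1}^{a-1}c_k$ with $c_k$ the number of sign changes of the indicator $[x_i>k]$, and using the elementary bound $c_k\le 2\min(bk,b(a-k))-[a=2k]$ for a binary string with fixed letter counts. This is cleaner and arguably more rigorous than the paper's argument, and it recovers exactly the paper's per-coordinate maxima. Your interaction corrections also match the paper's in substance: for odd $a$ the two central thresholds force both endpoints into the middle column (the paper's Region X), so in the odd--odd case both endpoints would have to be the unique center vertex (the paper's Case C contradiction); for $a,b$ both even the balanced thresholds force perfect alternation in both coordinates, confining the ordering to two opposite quadrants and hence to only $ab/2$ vertices (the paper's Case A contradiction, rephrased via crossings rather than quadrant alternation). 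All three corrected upper bounds agree with the theorem.

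The genuine gap is the lower bound. You never exhibit the orderings $s^*$: you describe ``a zigzag that at each step jumps across the center in both coordinates,'' but your own upper-bound argument proves that no such ordering exists in the even--even case (double alternation covers only two quadrants), and in the odd--odd case the endpoint constraints you derived are mutually contradictory, so any ordering must contain exactly one carefully placed defect --- one step or one endpoint where one coordinate gives up a single crossing --- while every other threshold stays saturated. Verifying that such an ordering exists is where the real content lies; it is what the paper spends most of its proof on (explicit constructions for Cases A, B, C, with the quadrant-by-quadrant and median bookkeeping shown in its figures). It is also exactly where the hypothesis $a,b>2$ enters: in the even--odd case the paper's construction threads through the median row and needs at least four median vertices, which is why ladder graphs ($a=2$) are excluded from the theorem; your sketch cannot detect this because it never touches the construction, and a correct proof must show where that hypothesis is used. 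As written, your argument establishes only that $t^+(G_{a,b})$ is at most the claimed values; the matching constructions, which are half the theorem, remain to be supplied.
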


We prove the theorem using the following Lemma:
\begin{lemma}\label{lem:max x distance}
Let $d_x(s)$ denote the sum of the $x$-components of each distance $d_i$ in ordering $s$. Then,
$$ 
\max(d_x(s)) =
\begin{cases}
\dfrac{a^2b}{2}-1, & \normalfont\text{if }a\text{ is even}\\[1em]
\dfrac{(a^2-1)b}{2}, & \normalfont\text{if }a\text{ is odd}
\end{cases}
$$
\end{lemma}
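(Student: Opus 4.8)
The plan is to reduce the two-dimensional optimization to a purely one-dimensional problem and then attack it with a threshold-counting argument. Since the grid distance is the $\ell_1$ (Manhattan) distance, the $x$-component $\abs{x_{u_{i+1}}-x_{u_i}}$ of each $d_i$ depends only on the sequence of $x$-coordinates visited by $s$. Each of the $a$ columns contains exactly $b$ vertices, so as $s$ ranges over all orderings of $V(G_{a,b})$ the sequence of $x$-coordinates ranges over all arrangements of the multiset in which each value $1,\dots,a$ occurs $b$ times; conversely any such arrangement is realizable, since within each block of equal $x$-value one may freely assign the $b$ distinct $y$-values to obtain distinct vertices. Hence $\max(d_x(s))$ equals the maximum of $\sum_{i=1}^{n-1}\abs{x_{i+1}-x_i}$ over all such arrangements, and $y$ drops out entirely.

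First I would rewrite the objective by counting threshold crossings. Writing $\abs{x_{i+1}-x_i}=\sum_{t=1}^{a-1}\mathbf 1[\min(x_i,x_{i+1})\le t<\max(x_i,x_{i+1})]$ and exchanging the order of summation gives $\sum_i\abs{x_{i+1}-x_i}=\sum_{t=1}^{a-1}c_t$, where $c_t$ is the number of consecutive pairs of $s$ with one entry $\le t$ and the other $>t$. For fixed $t$ there are $L_t=tb$ entries that are $\le t$ and $R_t=(a-t)b$ entries that are $>t$; viewing $s$ as a binary (low/high) string, I would invoke the elementary fact that such a string with $L$ and $R$ symbols has at most $2\min(L,R)$ unequal adjacent pairs, and at most $2\min(L,R)-1$ when $L=R$. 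Since $L_t=R_t$ occurs exactly when $a$ is even and $t=a/2$, summing these per-threshold bounds and evaluating $\sum_{t=1}^{a-1}2\min(tb,(a-t)b)$ (which equals $\tfrac{a^2b}{2}$ for $a$ even and $\tfrac{(a^2-1)b}{2}$ for $a$ odd, with a single $-1$ correction in the even case) yields exactly the claimed upper bounds.

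It remains to exhibit one arrangement attaining equality, and this simultaneity is the main obstacle: each $c_t$ is easy to bound in isolation, but I must produce a single ordering that is extremal at every threshold at once. Here I would observe that the per-threshold extremal conditions are nested: for $t<a/2$, equality $c_t=2L_t$ holds exactly when the entries with value $\le t$ are pairwise non-adjacent and avoid both endpoints, and satisfying this at the largest such $t$ forces it for all smaller $t$; the case $t>a/2$ is symmetric, while the lone middle threshold (even $a$) asks only for a perfect low/high alternation. I would therefore build the arrangement by alternating the low half of the values with the high half, reserving copies of the median value(s) for the two endpoints so that no value required to be interior ever sits at an end. Checking that this single ordering meets every $c_t$ bound simultaneously — and noting that holding two separators in reserve for the endpoints is precisely where the hypothesis $b>2$ enters (for odd $a$ the single-column case $b=1$ genuinely falls short of the bound) — furnishes the matching construction and completes the lemma.
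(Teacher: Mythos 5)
Your proposal is correct, but it reaches the lemma by a genuinely different route than the paper. The paper models $d_x(s)$ by a signed-coefficient relaxation: each interior vertex's $x$-value appears twice and each endpoint's once, half of the $2(n-1)$ signed terms are $+$ and half are $-$, and the rearrangement inequality then gives the upper bound; the matching construction is an ordering alternating between the half-grids $A$ and $B$ (with endpoints in the middle column(s)), whose $+/-$ pattern coincides with the extremal assignment. You instead decompose each $\abs{x_{i+1}-x_i}$ over unit thresholds and bound the number of crossings at each cut $t$ by the alternation bound $2\min(L_t,R_t)$ (minus $1$ when $L_t=R_t$) for binary strings, then argue the per-threshold equality conditions are nested so a single ordering can be extremal at every cut simultaneously. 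Your route buys a cleaner equality analysis: the conditions for attaining each bound are explicit adjacency/endpoint conditions, whereas the paper's claim that the extremal sign pattern ``may only be achieved by the construction $s^*$'' is asserted rather than argued. You also surface a hypothesis the paper uses silently: in the odd-$a$ case both endpoints must sit in the median column, so $b\ge 2$ is required (for $b=1$, odd $a$, the formula genuinely fails). The paper's approach, in turn, is shorter once the model is set up, and its $+/-$ sign machinery is reused later in the proof of Theorem 3.1, so it integrates better with the rest of the argument. One small imprecision on your side: you say the construction is ``precisely'' where $b>2$ enters, but your construction only needs $b\ge 2$ (and the even-$a$ case needs no restriction at all); under the paper's standing assumption $a,b>2$ this is immaterial.
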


\begin{proof}
First, we construct an ordering $s^*$ that attains the value of $\max(d_x(s))$ given in Lemma \ref{lem:max x distance} above (See Figure \ref{fig:max x distance}). Denote the $x$-value of a vertex $u_i$ by $x_i$.

\begin{figure}[htb]
    \centering
    \hspace*{\fill}
    \subfigure[Example of regions A and B on $G_{6,4}$. No two consecutive vertices are both in A or both in B.]
    {
        \includegraphics[height=1.6in]{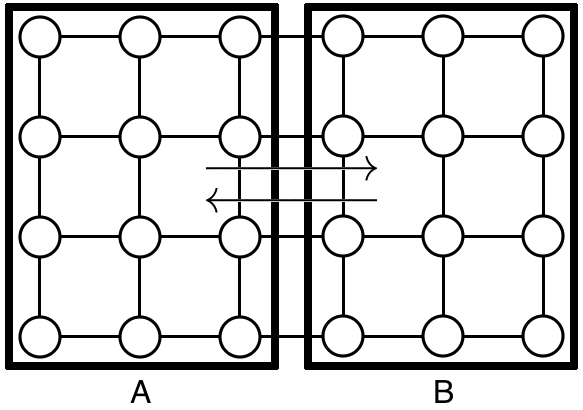}
        \label{fig:max x even}
    }
    \hspace*{\fill}
    \subfigure[Example of regions A, B and X on $G_{5,4}$. No two consecutive vertices are both in A or both in B. However, two consecutive vertices may be in X.]
    {
        \includegraphics[height=1.6in]{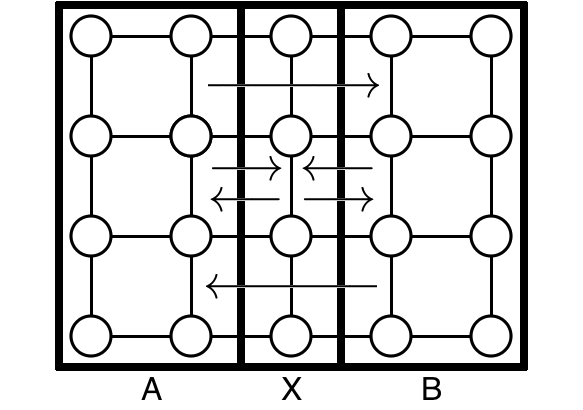}
        \label{fig:max x odd}
    }
    \hspace*{\fill}
    \caption{Constructing $s^*$ that achieves $\max(d_x(s))$.}
    \label{fig:max x distance}
\end{figure}

\textbf{Case 1: a is even}
\begin{enumerate}
\item $x_1 = \frac{a}{2}$ and $x_n = \frac{a}{2}+1$.
\item Each successive $u_{i+1}$ alternates from Region A to Region B, where 
\subitem Region A consists of all vertices such that $x_j \le \frac{a}{2}$, and
\subitem Region B consists of all vertices such that $x_k \ge \frac{a}{2}+1$. See Figure \ref{fig:max x even}.
\end{enumerate}

\textbf{Case 2: a is odd}
\begin{enumerate}
\item $x_1 = x_n = \frac{a+1}{2}$.
\item There does not exist a pair $u_i, u_{i+1}$ which are both in Region A or both in Region B, where
\subitem Region A consists of all vertices such that $x_j < \frac{a+1}{2}$, and
\subitem Region B consists of all vertices such that $x_k > \frac{a+1}{2}$, and
\subitem Region X consists of all vertices such that $x_l = \frac{a+1}{2}$. See Figure \ref{fig:max x odd}.
\end{enumerate}

Now, we establish an upper bound on the value of $d_x(s)$.
Since $d_x(u_{i+1},u_{i}) = \abs{x_{i+1}-x_{i}}$, one vertex contributes a positive value and the other a negative value to the distance calculation. Notice that we can represent distances by labeling the graph with $+$ and $-$ signs \cite{pathcycle}, as in Figure \ref{fig:+- distance}. Looking at $d_x(s) = \sum_{i=1}^{n-1}\abs{x_{i+1}-x_i}$, we see that the sum has $2$ of each $x_2$ through $x_{n-1}$, and 1 of each term $x_1$ and $x_n$. Half of the terms in the sum are positive, and the other half are negative. In fact, the value of $d_x(s)$ can be modeled as such: given multisets $A = \{a_1,a_2,\ldots,a_{2n}\}$ and $B = \{b_1,b_2,\ldots,b_{2n}\}$, where 
$$A = \{\overbrace{-1,\ldots,-1}^{n-1},0,0,\overbrace{1,\ldots,1}^{n-1}\} \text{ and } B = \{\overbrace{1,\ldots,1}^{2b},\overbrace{2,\ldots,2}^{2b},\ldots,\overbrace{a,\ldots,a}^{2b}\},$$
any value of $d_x(s)$ can be modeled by the quantity $a_1b_1+a_2b_2+\ldots+a_{2n}b_{2n}$. By the rearrangement inequality \cite{rearrangement-ineq}, the maximum sum of $d_x(s) = a_1b_1+a_2b_2+\ldots+a_{2n}b_{2n}$ occurs when $A$ and $B$ are both sorted in nondecreasing order, giving us an upper bound.

\begin{figure}[htb]
    \includegraphics[scale=0.4]{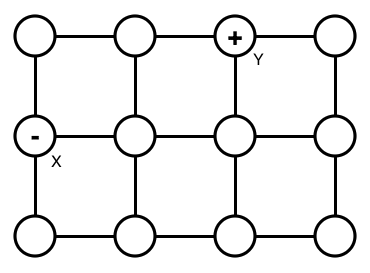}
    \caption{Using $+$ and $-$ signs to represent $x$-component distance. Above, we have $d_x(X,Y) = -1 + 3 = 2$}
    \label{fig:+- distance}
\end{figure}

When depicting this maximum sum using $+$ and $-$ signs on the grid graph, there are two configurations based on whether $a$ is even or odd.

\textit{When $a$ is even:}
\begin{itemize}
\item Region B contains all $+$ signs, and Region A contains all $-$ signs.
\item The two vertices with 1 sign have $x$-values of $\frac{a}{2}$ and $\frac{a}{2}+1$ respectively.
\end{itemize}

\textit{When $a$ is odd:}
\begin{itemize}
\item Region B contains all $+$ signs, and Region A contains all $-$ signs.
\item There are an equal number of $+$ and $-$ signs in Region X, and the two vertices with 1 label lie in Region X.
\end{itemize}

Notice that if we represent $d_x(s^*)$ with $+$ and $-$ signs (from the construction given above), the $+$ and $-$ sign representation coincides with the $+$ and $-$ signs in the upper bound given here. In fact, the $+$ and $-$ sign representation given here may only be achieved by the construction $s^*$ given above. Since we constructed cases that achieve the upper bound of $d_x(s)$, we have proved that our construction attains $\max(d_x(s))$ and we are done.
\end{proof}

Now that we have established orderings $s^*$ that attain $\max(d_x(s))$, we use these results in the $x$ and $y$ directions to prove Theorem \ref{thm:utn grid}.

\textit{Proof of Theorem \ref{thm:utn grid}.}

\textbf{Case A - $a,b$ are even:}
Using the fact that $t^+(G_{a,b}) = \max(d(s)) \le \max(d_x(s)) + \max(d_y(s))$  because $d(s) = d_x(s) + d_y(s)$, we show that $t^+(G_{a,b}) = \max(d_x(s)) + \max(d_y(s)) - 1$ for $a,b$ even. 

For sake of contradiction, assume $\max(d(s)) = \max(d_x(s)) + \max(d_y(s))$. Let 
the $1^\text{st}$ quadrant be the set of vertices with $x_j \geq \frac{a+1}{2}$ and $y_j \geq \frac{b+1}{2}$,
the $2^\text{nd}$ quadrant be the set of vertices with $x_j \leq \frac{a}{2}$ and $y_j \geq \frac{b+1}{2}$,
the $3^\text{rd}$ quadrant be the set of vertices with $x_j \leq \frac{a}{2}$ and $y_j \leq \frac{b}{2}$, and
the $4^\text{th}$ quadrant be the set of vertices with $x_j \geq \frac{a+1}{2}$ and $y_j \leq \frac{b}{2}$. 
See Figure \ref{fig:even x even quadrants} below. 
Then, applying the results from Lemma \ref{lem:max x distance}, we split the grid into 4 quadrants and consider the necessary conditions to simultaneously achieve $\max(d_x(s))$ and $\max(d_y(s))$.
\begin{enumerate}
\item Vertices $u_1$ and $u_n$ lie on opposing central vertices of the graph, i.e. vertices $(\frac{a}{2}+1,\frac{b}{2}+1)$ and $(\frac{a}{2},\frac{b}{2})$ or $(\frac{a}{2},\frac{b}{2}+1)$ and $(\frac{a}{2}+1,\frac{b}{2})$. (Highlighted in Figure \ref{fig:even x even quadrants})
\item Vertices $u_{i}$ and $u_{i+1}$ alternate from quadrants 1 to 3 or from quadrants 2 to 4.
\end{enumerate}

The above conditions cannot be satisfied, as $u_1$ starts in quadrant 1, and there is no way to reach quadrants 2 or 4 without breaking condition (2), so $\max(d(s)) \neq \max(d_x(s)) + \max(d_y(s))$ as desired. Thus, we construct an ordering $s^*$ such that $d(s^*) = \max(d_x(s^*)) + \max(d_y(s^*)) - 1$. Starting with $u_1 = (\frac{a}{2}+1,\frac{b}{2}+1)$, we choose vertices alternating from quadrants 1 and 3, until all vertices in quadrants 1 and 3 have been chosen. Now we must break condition (2) by choosing the next vertex in either quadrant 2 or 4. Without loss of generality, we choose a vertex in quadrant 4, picking any vertex with a $y$-value of $\frac{b}{2}$. We then continue choosing vertices by alternating between quadrants 2 and 4 such that the final vertex, $u_n = (\frac{a}{2},\frac{b}{2}+1)$.

Considering the ordering $s^*$ we have just created, we see that $d_x(s^*) = \max(d_x(s))$, and $d_y(s^*) = \max(d_y(s)) - 1$ using $+$ sign $-$ sign analysis. In this case, we have a $+$ sign in Region A with $y$-value $\frac{b}{2}$, and two vertices with 1 $+$ sign in Region B with $y$-value $\frac{b}{2}+1$. Comparing this configuration to the one achieving maximum $y$-distance, we have moved a $+$ sign from the $\frac{b}{2}+1$ level to the $\frac{b}{2}$ level, resulting in a net decrease of 1. Thus, $t^+(G_{a,b}) = \max(d_x(s)) + \max(d_y(s)) - 1$.

\begin{figure}[tb]
    \centering
    \subfigure[Example $G_{8,6}$ split into four quadrants.]
    {
        \includegraphics[height=2.0in]{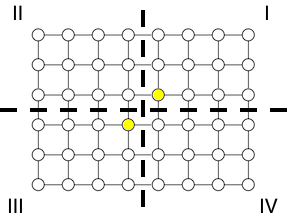}
        \label{fig:even x even quadrants}
    }
    \\
    \hspace*{\fill}
    \subfigure[Example $G_{8,7}$ split into four quadrants and a median in the odd dimension.]
    {
        \includegraphics[height=2.0in]{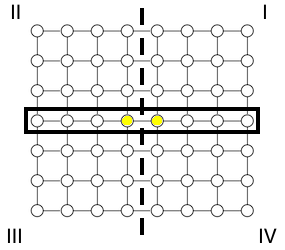}
        \label{fig:even x odd quadrants}
    }
    \hspace*{\fill}
    \subfigure[Example $G_{7,7}$ split into four quadrants and two medians.]
    {
        \includegraphics[height=2.0in]{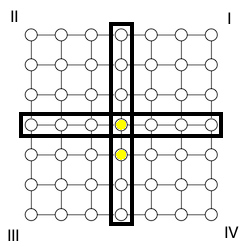}
        \label{fig:odd x odd quadrants}
    }
    \hspace*{\fill}
    \caption{Constructing $s^*$ that achieves $\max(d(s))$. In each case, start and end vertices of the ordering $u_1$ and $u_n$ are highlighted in yellow.}
    \label{fig:max xy distance}
\end{figure}

\textbf{Case B - $a$ is even, $b$ is odd:}
In this case, we are able to show $t^+(G_{a,b}) = \max(d_x(s)) + \max(d_y(s))$ when $a$ is even, $b$ is odd, and $a > 2$. Thus, there exists an ordering $s^*$ that satisfies the combined conditions below: 

\begin{enumerate}
\item $u_1$ and $u_n$ lie in the center of the $x$-median, i.e. $x_1 = (\frac{a}{2},\frac{b+1}{2})$ and $x_n = (\frac{a}{2} + 1,\frac{b+1}{2})$
\item There does not exist a pair $u_i, u_{i+1}$ such that $x_i,x_{i+1} \ge \frac{a}{2} + 1$, or $x_i,x_{i+1} \le \frac{a}{2}$, or $y_i,y_{i+1} > \frac{b+1}{2}$, or $y_i,y_{i+1} < \frac{b+1}{2}$.
\end{enumerate}

We construct $s^*$ by referring to quadrants shown in Figure \ref{fig:even x odd quadrants}. Start with $u_1 = (\frac{a}{2},\frac{b+1}{2})$, and choose $u_2$ in quadrant 1. Then choose vertices, alternating between quadrants 1 and 3, until all of the vertices in quadrants 1 and 3 have been used. Then, choose vertices on the $x$-median, choosing each successive vertex on opposite sides of the dotted line, such that $(\frac{a}{2}+1, \frac{b+1}{2})$ is unused (as it is reserved for $u_n$). Notice that this requires at least 4 vertices in the median section, explaining the condition $a > 2$ (and why ladder graphs are a special case). When all median vertices on the left side of the dotted line are chosen, choose a vertex in quadrant 4. Then, choose vertices alternating between quadrants 2 and 4 until all vertices in quadrant 2 have been used. We then choose the final vertex, $u_n = (\frac{a}{2}+1,\frac{b+1}{2})$. Since this ordering $s^*$ attains $\max(d_x(s))$ and $\max(d_y(s))$, we are done.

\textbf{Case C - $a,b$ are odd:}
First, we show that $\max(d(s)) \neq \max(d_x(s)) + \max(d_y(s))$.
Applying the results from Lemma \ref{lem:max x distance}, we split the grid into 4 quadrants and two medians, as shown in Figure \ref{fig:odd x odd quadrants} above.

Now, combining requirements to attain $\max(d_x(s))$ and $\max(d_y(s))$, each subsequent $u_{i+1}$ is under the following combined restrictions:
\begin{enumerate}
\item Vertices $u_1$ and $u_n$ reside in the intersection of the two medians.
\item There does not exist a pair $u_i, u_{i+1}$ such that $x_i,x_{i+1} > \frac{a+1}{2}$, or $x_i,x_{i+1} < \frac{a+1}{2}$, or $y_i,y_{i+1} > \frac{b+1}{2}$, or $y_i,y_{i+1} < \frac{b+1}{2}$.
\end{enumerate}

Clearly, condition (1) cannot be satisfied as there is only 1 vertex in the intersection of the medians. Thus, we have shown that $\max(d(s)) \neq \max(d_x(s)) + \max(d_y(s))$ as desired. We now construct $s^*$ such that $\max(d(s)) = \max(d_x(s)) + \max(d_y(s)) - 1$ by letting $u_1 = (\frac{a+1}{2}, \frac{b+1}{2})$ and $u_n = (\frac{a+1}{2}, \frac{b+1}{2} - 1)$, while following condition (2) above. Since $d_x(s^*) = \max(d_x(s))$, and we may verify that $d_y(s^*) = \max(d_y(s))-1$ using the same logic from Case A, we have $d(s^*) = \max(d_x(s)) + \max(d_y(s)) - 1$ as desired.



\section{Bumps on the Grid Graph}\label{sec:bump}
Bumps are the distinguishing factor between the upper traceable number problem and the radio number problem. Recall from Remark \ref{rem:additional value} that a bump $b_i$ is the additional value on $f(u_{i+1})$ than is required by its tightness neighbor constraint (Definition \ref{tightness neighbor}) from $u_i$.

A vertex $u_k$'s set of tightness neighbor constraints allows us to classify which vertex makes $b_k > 0$. Then, for the largest $c>1$ such that $u_{k-c} \in \tn(u_k)$, we say $u_k$ has a $(k-c)$ bump. 
We proceed to examine different types of $(k-c)$ bumps.


\textbf{The $(k-2)$ bump:}
Consider vertices $u_{k},u_{k-1},u_{k-2}$ for $3 \le k \le n$.

\begin{figure}[ht]
    \centering
    \hspace*{\fill}
    \subfigure[Example of the $(k-2)$ bump. 
    If we order vertices by increasing label, combined tightness constraints on $u_{k-2}, u_{k-1}$ and $u_{k-1}, u_{k}$ do not satisfy the tightness constraint on $u_k, u_{k-2}$ (in the rectangle).
    ]
    {
        \includegraphics[width=2.5in]{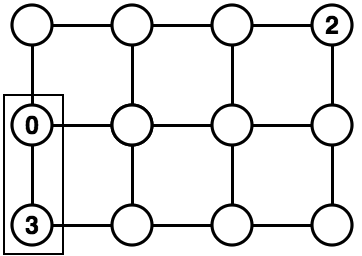}
        \label{fig:k-2 bump example}
    }
    \hspace*{\fill}
    \subfigure[Example of the bounding rectangle, where $d_\text{rect} = d(P,u_{k-1})$. In the  ordering $s$, we see $d(u_{k-2},u_{k-1}) = d(u_{k-2},P) + d(P, u_{k-1})$, and $d(u_{k-1},u_{k}) = d(u_{k-1},P) + d(P, u_{k})$. Thus we have $d_{k-1} + d_{k-2} - d(u_{k},u_{k-2}) = 2d_\text{rect}$.]
    {
        \includegraphics[width=2.5in]{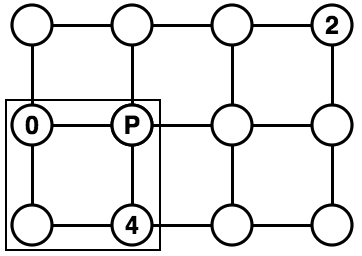}
        \label{fig:drect}
    }
    \hspace*{\fill}
    \caption{The $(k-2)$ bump on grid graphs.}
    \label{fig:clarify bump}
\end{figure}

Then, as shown in Figure \ref{fig:k-2 bump example}, in a $(k-2)$ bump on vertex $u_k$, we have
$$D+1 - d(u_{k},u_{k-2}) > f_{k-2}+f_{k-1} = (D+1 - d_{k-2} + b_{k-2}) + (D+1 - d_{k-1} + b_{k-1}).$$

Simplifying, we get $d_{k-1} + d_{k-2} - d(u_{k},u_{k-2}) > D+1 + b_{k-1} + b_{k-2}$. The left hand side expression $d_{k-1} + d_{k-2} - d(u_{k},u_{k-2})$ has an special interpretation on the grid graph, shown in Figure \ref{fig:drect}. Consider the bounding rectangle formed using vertices $u_{k-2}$ and $u_{k}$ as two corners. Then, define $d_\text{rect}$ to be the minimum distance from any vertex within the bounding rectangle to vertex $u_{k-1}$. Then we have $d_{k-1} + d_{k-2} - d(u_{k},u_{k-2}) = 2d_\text{rect}$. Substituting this into the inequality, we see that the following condition necessarily creates a $(k-2)$ bump:
\begin{equation}\label{eq:bump condition}
d_\text{rect} > \frac{D+1 + b_{k-1}}{2}.
\end{equation}

\textbf{The $(k-3)$ bump:}
There is no configuration of vertices $u_{k},u_{k-1},u_{k-2},u_{k-3}$ such that $b_i > 0$.
For sake of contradiction, let us assume that such a configuration exists. Then, we have 
$$D+1 - d(u_{k},u_{k-3}) > f_{k-1} + f_{k-2} + f_{k-3}.$$

The expression simplifies to $d_{k-1} + d_{k-2} + d_{k-2} - d(u_{k},u_{k-3}) > 2(D+1) + b_{k-1} + b_{k-2} + b_{k-3}$. By using a similar method to evaluate $d_{k-1} + d_{k-2} - d(u_{k},u_{k-2})$ on the $(k-2)$ bump, we see that $d_{k-1} + d_{k-2} + d_{k-2} - d(u_{k},u_{k-3}) \ge 2d(u_{k-1},u_{k-2})$. However, this is a contradiction because $2d(u_{k-1},u_{k-2}) > 2(D+1) + b_{k-1} + b_{k-2} + b_{k-3} > 2(D+1) \implies d(u_{k-1},u_{k-2}) > D+1$. Thus, our original assumption that there exists a configuration of $u_{k},u_{k-1},u_{k-2},u_{k-3}$ such that $b_i > 0$ was false.

\textbf{The $(k-q)$ bump, for $q \ge 4$:}
If there exists some $q$ such that 
\begin{equation}\label{eq:lastbump}
\min(f(u_k)-f(u_{k-q})) = \min\left(\sum_{i=1}^{q}f_{u-i}\right) > D,
\end{equation}
then no $u_{k-r}$, where $r \ge q$, is a tightness neighbor of $u_k$. With this, we prove that $s^*$ has no $(k-q)$ bumps, for $q \ge 4$.

\textbf{Case 1: there are no $(k-2)$ bumps.}
Let us consider $\min(f_{k-1}+f_{k-2})$, under the condition that there are no $(k-2)$ bumps, or $d_{k-1} + d_{k-2} - d(u_{k},u_{k-2}) \ge D+1$. Clearly, we want to maximize $d_{k-1} + d_{k-2}$ to minimize $f_{k-1}+f_{k-2}$ since $f_i = D + 1 - d_i$. Using the fact that $d_{k-1} + d_{k-2} - d(u_{k},u_{k-2}) = 2d_\text{rect}$, we see to maximize $d_{k-1} + d_{k-2}$ and have 0 bumps, we need $d_\text{rect} = \lfloor \frac{D+1}{2}\rfloor$. Then, 
$\max(d_{k-1} + d_{k-2}) = 2D - (D - \lfloor \frac{D+1}{2}\rfloor)$.
This yields $\min(f_{k-1}+f_{k-2}) = 2(D+1)-(D + \lfloor \frac{D+1}{2}\rfloor).$ Simplifying, we get 
$$\min(f_{k-1}+f_{k-2}) = \left\lceil\frac{D-1}{2}+2\right\rceil.$$
With this, we see that $\min(f_{k-1}+f_{k-2})+\min(f_{k-3}+f_{k-4}) = 2\lceil\frac{D-1}{2}+2\rceil > D$, so $q=4$ satisfies \eqref{eq:lastbump} and we have shown that there are no $(k-q)$ bumps for $q \ge 4$ in this case. 

\textbf{Case 2: there are $(k-2)$ bumps.}
If there exist $(k-2)$ bumps, then we have $d_{\text{rect}}>\lfloor \frac{D+1}{2} \rfloor$, so  $d(u_{k},u_{k-2}) < D-\lfloor\frac{D+1}{2}\rfloor$ since $d(u_k, u_{k-2}) \le D - d_{\text{rect}}$. Then, $f(u_{k}) - f(u_{k-2}) > D+1 - (D-\lfloor\frac{D+1}{2}\rfloor) = \lfloor\frac{D+1}{2} + 1\rfloor$. Again, we have $\min(f_{k-1}+f_{k-2})+\min(f_{k-3}+f_{k-4}) = \min(\sum_{i=1}^{4}f_{u-i}) > D$, so $q=4$ satisfies \eqref{eq:lastbump} and we are done.

From our work in this section, we observe the following:
\begin{remark}
On a grid graph $G_{a,b}$, all bumps must be $(k-2)$ bumps satisfying Equation \eqref{eq:bump condition}. We refer to this as the \textit{bump condition} on grid graphs.
\end{remark}

\section{Radio Number of Grid Graph}\label{sec:grid}
We return to the critical expression, $\max\left(\sum_{i=1}^{n-1}\left(d_i-b_i\right)\right)$ from Lemma \ref{lem:rn is d-b}. We solve for this expression in each subcase of the grid graph: even by even, even by odd, and odd by odd grids, giving the following result:
\begin{theorem}
On the grid graph $G_{a,b}$, with $a,b>2$,
$$
\rn(G_{a,b}) =
\begin{cases}
\dfrac{a^2b+b^2a}{2}-ab-a-b+6, & \normalfont\text{if }a,b\text{ are even}
\\[1em]
\dfrac{a^2b+b^2a-a}{2}-ab-b+2, & \normalfont\text{if }a\text{ is even, }b \text{ is odd}
\\[1em]
\dfrac{a^2b+b^2a-a-b}{2}-ab+2, & \normalfont\text{if }a,b\text{ are odd}
\end{cases}
$$
\end{theorem}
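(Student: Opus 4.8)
The plan is to invoke Lemma~\ref{lem:rn is d-b} directly. Since $\rn(G_{a,b}) = (n-1)(D+1) - \max\sum_{i=1}^{n-1}(d_i - b_i)$, and on the grid $n = ab$ and $D = a+b-2$, the whole problem collapses to evaluating $\max\sum(d_i-b_i)$ in each parity class. Expanding $(ab-1)(a+b-1) = a^2b + ab^2 - ab - a - b + 1$ produces the polynomial part of all three formulas, so each closed form will follow once I fix the additive constant contributed by the joint distance--bump optimization.

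First I would settle the inequality common to every case: since bumps are nonnegative, any ordering obeys $\sum(d_i-b_i) \le \sum d_i \le t^+(G_{a,b})$, so $\rn(G_{a,b}) \ge (n-1)(D+1) - t^+(G_{a,b})$. Plugging in the three values of $t^+$ from Theorem~\ref{thm:utn grid}, this bound is already exactly the claimed answer when $a$ is even and $b$ odd, and when $a,b$ are both odd; when $a,b$ are both even it falls short of the claimed value by exactly $2$. It remains to (i) match this lower bound with an explicit labeling whenever at least one of $a,b$ is odd, and (ii) both raise the lower bound by $2$ and produce a matching labeling when $a,b$ are even.

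For the matching labelings I would recycle the distance-optimal orderings $s^*$ from the proof of Theorem~\ref{thm:utn grid}, converting each into a radio labeling by $f(u_1)=0$ and $f(u_{i+1}) = f(u_i) + (D+1-d_i) + b_i$ with the $b_i$ taken minimally. By the classification of Section~\ref{sec:bump} only $(k-2)$ bumps are possible, so I only need to monitor $d_\text{rect}$ on consecutive triples. The key is that the quadrant/median alternation pinning $\sum d_i = t^+$ still leaves freedom in the order of vertices, and I would exploit this to keep every consecutive triple close to the central median, forcing $d_\text{rect} \le \frac{D+1}{2}$ and hence $b_i = 0$ throughout. Checking that a bump-free order is compatible with the alternation constraints yields $\sum(d_i - b_i) = t^+$, closing the cases where $a$ or $b$ is odd.

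The genuine difficulty is the $a,b$-even case, where no bump-free optimal order exists and I must establish $\max\sum(d_i-b_i) = t^+ - 2$. For the upper bound on $\rn$ I would take the Case~A ordering of Theorem~\ref{thm:utn grid} and verify that its minimal labeling carries total bump exactly $2$, the forced bumps sitting at the unavoidable transition between the quadrant-$1$/$3$ phase and the quadrant-$2$/$4$ phase. For the lower bound I would prove that every ordering satisfies $(t^+ - \sum d_i) + \sum b_i \ge 2$, i.e.\ distance deficit plus total bump is at least $2$. This is the main obstacle. I expect to show that forcing $\sum d_i = t^+$ imposes the endpoint and alternation structure of Case~A, which then forces the minimal labeling to accumulate total bump at least $2$, while controlling the recursion $b_i = \max\!\left(0,\; 2d_\text{rect} - (D+1) - b_{i-1}\right)$ so that these bumps cannot cancel below a total of $2$. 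At the same time I must rule out orderings that sacrifice distance to save bump at a favorable exchange rate, so that the inequality holds for every ordering rather than only the distance-optimal ones. Making this trade-off argument uniform in $a$ and $b$ is where the real work lies.
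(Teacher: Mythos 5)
Your proposal follows essentially the same route as the paper: it reduces everything to Lemma~\ref{lem:rn is d-b}, matches the lower bound $(n-1)(D+1)-t^+(G_{a,b})$ with bump-free distance-optimal orderings when at least one of $a,b$ is odd, and handles the even-even case by showing $\max\sum_{i=1}^{n-1}(d_i-b_i)=t^+(G_{a,b})-2$ via an optimal ordering carrying exactly two bumps together with a deficit-plus-bump trade-off bound. The work you defer --- the explicit bump-free orderings, the 2-bump construction, and the argument that $\bigl(t^+(G_{a,b})-\sum d_i\bigr)+\sum b_i\ge 2$ for every ordering --- is precisely what the paper's proof supplies, the last point via its corner analysis: any ordering attaining $t^+(G_{a,b})$ must place the four corner vertices so that every two corners induce a bump, while a bump-free ordering sacrifices at least one unit of distance per corner, giving $d(s)\le t^+(G_{a,b})-4$.
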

\begin{proof}
\textbf{Case A: $a$ is even, $b$ is odd} - We start with this case as it is the easiest and provides insight into other cases. Here, we can construct an ordering $s^*$ such that 
$$\max\left(\sum_{i=1}^{n-1}\left(d_i-b_i\right)\right) = t^+(G_{a,b}),$$
by constructing an ordering that attains $t^+(G_{a,b})$ as shown in the proof to Theorem \ref{thm:utn grid}, while ensuring there are no bumps.

\begin{figure}[tb]
    \centering
    \hspace*{\fill}
    \subfigure[Ordering $s^*$ on $G_{6,5}$. Parts (1), (2), and (3) are marked in red, green, and blue respectively. Notice the ordering of even vertices in quadrant 3.]
    {
        \includegraphics[height=1.8in]{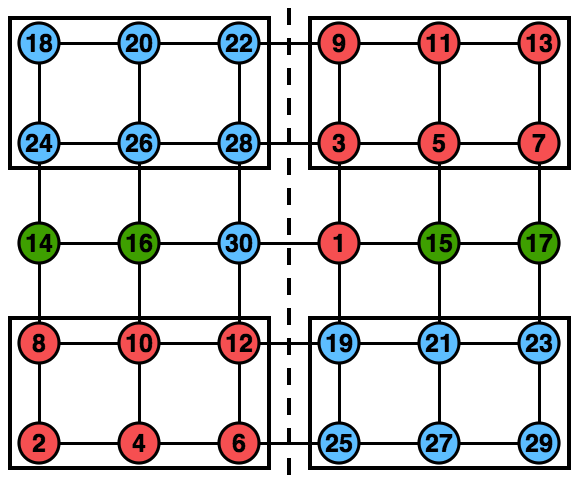}
        \label{fig:even x odd ordering}
    }
    \hspace*{\fill}
    \subfigure[Solution on $G_{6,5}$ using ordering $s^*$ from (a).]
    {
        \includegraphics[height=1.8in]{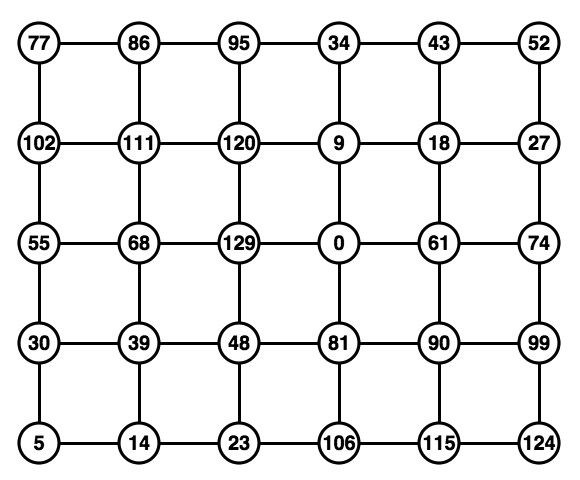}
        \label{fig:even x odd labeling}
    }
    \hspace*{\fill}
    \caption{Optimal ordering $s^*$ on $G_{6,5}$ and solution $f$ attaining $\rn(G_{6,5})$.}
    \label{fig:even x odd solution}
\end{figure}

We provide a three-part algorithm to create such an ordering $s^*$ (see Figure \ref{fig:even x odd ordering}).
\begin{enumerate}
\item Choose all vertices from quadrants 1 and 3,
\item Choose vertices in the median,
\item Choose all vertices from quadrants 2 and 4
\end{enumerate}
Before we define each step of the algorithm, for convenience we remap the vertices of $G_{a,b}$ in relation to the 4 quadrants and 1 median.
Let $q_{w,(x,y)}$ denote a vertex in quadrant $w$, with the position $(x,y)$ on the quadrant $w$ subgraph of $G_{a,b}$. For example, $(\frac{a}{2}+1,\frac{b+1}{2}+1) = q_{1,(1,1)}$, as it is the vertex $(1,1)$ in the quadrant 1 subgraph of $G_{a,b}$. Let $m_{z,p}$ denote a vertex on the median, where $z$ denotes either the left median $l$ or right median $r$, and $p$ denotes the position on the corresponding subgraph of $G_{a,b}$. For example, $(1,\frac{b+1}{2}) = m_{l,1}$ and $(\frac{a}{2}+1,\frac{b+1}{2}) = m_{r,1}$.

For part (1), we let $u_1 = m_{r,1}$ and $u_2 = q_{3,(1,1)}$. For each subsequent vertex $u_{i+1}$, we alternate between quadrants 1 and 3, such that if $u_{k} = q_{3,(x,y)}$, $u_{k+1} = q_{1,(x,y)}$. Clearly, the order of vertices chosen in quadrant 3 defines ordering $s^*$ for part (1), so we provide this ordering. As all these vertices are in quadrant 3, we use the shorthand $(x,y) = q_{3,(x,y)}$ below (See Figure \ref{fig:even x odd ordering}).
$$(1,1), (2,1), \ldots ,\left(\frac{a}{2},1\right), (1,2), (2,2), \ldots ,\left(\frac{a}{2},2\right), \ldots, \left(1,\frac{b-1}{2}\right), \left(2,\frac{b-1}{2}\right), \ldots ,\left(\frac{a}{2},\frac{b-1}{2}\right)$$  
Since each $d_{\text{rect}} = \frac{D+1}{2}$, there are no bumps in part (1).

After all vertices in quadrants 1 and 3 are chosen, we move to part (2). If the last vertex chosen in part (1) is $u_{k} = q_{1,(\frac{a}{2},\frac{b-1}{2})}$ for some $k$, let $u_{k+1} = m_{l,1}$. Then, we choose vertices alternating from left median to right median as follows: $m_{l,1}, m_{r,2}, m_{l,2}, m_{r,3}, \ldots, m_{l,\frac{a}{2}-1}, m_{r,\frac{a}{2}}$. Here, each $d_{\text{rect}} \leq \frac{a}{2}+1 < \frac{D+1}{2}$, so part (2) has no bumps.

By symmetry, part (3) is identical to part (1), so $s^*$ has no bumps and we are done. From Lemma \ref{lem:rn is d-b}, we have
$$\rn(G_{a,b}) = (ab-1)(a+b-1)-t^+(G_{a,b}) = \frac{a^2b+b^2a-a}{2}-ab-b+2.$$

\textbf{Case B: $a,b$ are even} - In this case, 
$\max\left(\sum_{i=1}^{n-1}\left(d_i-b_i\right)\right) = t^+(G_{a,b}) -2.$ To prove this, we construct such an ordering $s^*$, and then provide an upper bound.

\begin{figure}[tb]
    \centering
    \hspace*{\fill}
    \subfigure[Ordering $s^*$ on $G_{6,6}$. The beginning and end of ordering $s^*$ contain $(k-2)$ bumps and are in blue. The red vertices adhere to the ``diagonal method", while the green vertices adhere to a ``perturbed diagonal method". 
    ]
    {
        \includegraphics[height=2.15in]{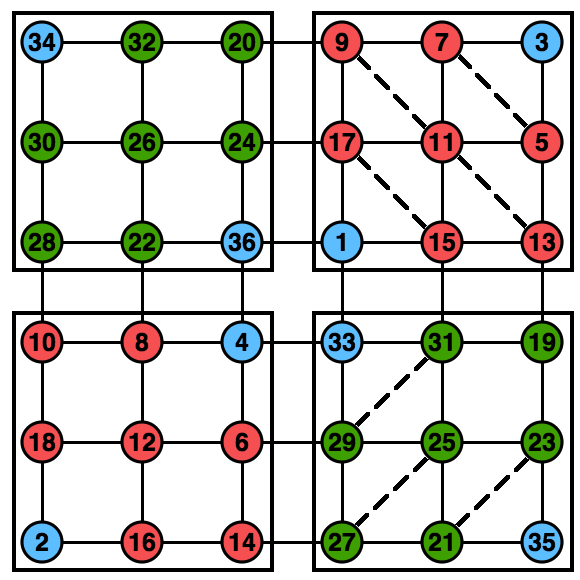}
        \label{fig:even x even ordering}
    }
    \hspace*{\fill}
    \subfigure[Solution on $G_{6,6}$ using ordering $s^*$ from (a).]
    {
        \includegraphics[height=2.15in]{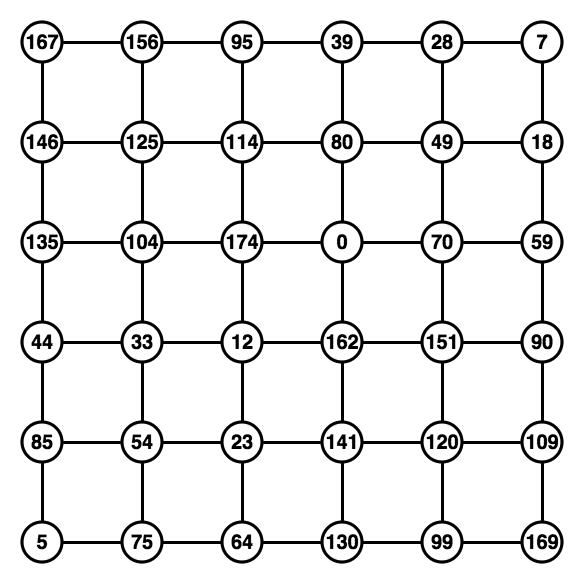}
        \label{fig:even x even labeling}
    }
    \hspace*{\fill}
    \caption{Optimal ordering $s^*$ on $G_{6,6}$ and solution $f$ attaining $\rn(G_{6,6})$.}
    \label{fig:even x even solution}
\end{figure}

To construct $s^*$ with $\sum_{i=1}^{n-1}(d_i-b_i) = t^+(G_{a,b}) -2$, we provide an algorithm using the same $q_{w,(x,y)}$ notation.
Let $u_1 = q_{1,(1,1)}, u_2 = q_{3,(1,1)}, u_3 = q_{1,(\frac{a}{2},\frac{b}{2})}, u_4 = q_{3,(\frac{a}{2},\frac{b}{2})}$. Notice that $d_{\text{rect}} = \frac{D+2}{2} > \frac{D+1}{2}$ for vertices $u_1,u_2,u_3$, so $b_2 = 1$. For vertices $u_2,u_3,u_4$, we have $d_{\text{rect}} = \frac{D+1+b_2}{2}$, so there is no bump.
Then for each subsequent $u_{i+1}$, if vertex $u_{i} = q_{1,(x,y)}$, let $u_{i+1} = q_{3,(x,y)}$. This time, the labeling scheme for quadrants 1 and 3 is completely determined by the ordering of vertices chosen in quadrant 1. By using a ``diagonal method" ordering for quadrant 1 shown by red vertices in Figure \ref{fig:even x even ordering}, we see that $d_{\text{rect}} = \frac{D+1}{2}$ for all $u_i$ and $ i \ge 3$.
If the last vertex chosen in quadrant 3 was $u_k$ for some $k$, let $u_{k+1} = q_{4,(\frac{a}{2},\frac{b}{2})}$. Again, each subsequent vertex alternates between quadrants 2 and 4, with each $u_{i+1}$ in quadrant 2 lying on the corresponding vertex $u_{i}$ from quadrant 4. Using a similar ``perturbed diagonal method" in green vertices in Figure \ref{fig:even x even ordering}, we choose all the remaining vertices other than $q_{2,(\frac{a}{2},1)},q_{2,(1,\frac{b}{2})},q_{4,(\frac{a}{2},1)},q_{4,(1,\frac{b}{2})}$. To close off, we let $u_{n-3}=q_{4,(1,\frac{b}{2})},u_{n-2}=q_{2,(1,\frac{b}{2})},u_{n-1}=q_{4,(\frac{a}{2},1)},u_{n}=q_{2,(\frac{a}{2},1)}$. By symmetry with $u_1,u_2,u_3$, we have $b_{n-2}=1$. Thus, the given ordering $s^*$ attains $t^+(G_{a,b})$ and has 2 bumps.

Now, we show $\max\left(\sum_{i=1}^{n-1}\left(d_i-b_i\right)\right) \leq t^+(G_{a,b}) -2.$ If we have an ordering $s^*$ such that $d(s^*) = t^+(G_{a,b})$, we must have at least 2 bumps, as $d_{\text{rect}} > \frac{D+1}{2}$ (satisfying bump the condition above) when we include each corner vertex $(1,1), (1,\frac{b}{2}),(\frac{a}{2},\frac{b}{2}),(\frac{a}{2},1)$ to ordering $s^*$, and every two corners induce at least 1 bump. The other case is an ordering $s^*$ with 0 bumps. 
Since including any corner of $G_{a,b}$ decreases $d(s^*)$ from $\max(d(s^*))$ by 1 and there are 4 corners, when there are 0 bumps, $d(s^*) \le t^+(G_{a,b}) - 4$. Thus, $\max(\sum_{i=1}^{n-1}\left(d_i-b_i\right)) \not> t^+(G_{a,b}) -2$ as desired.

With this, we have
$$\rn(G_{a,b}) = (ab-1)(a+b-1)-(t^+(G_{a,b}) -2)= \frac{a^2b+b^2a}{2}-ab-a-b+4.$$

\textbf{Case C: $a,b$ are odd} - In this case, 
$\max\left(\sum_{i=1}^{n-1}\left(d_i-b_i\right)\right) = t^+(G_{a,b}).$ We construct such an ordering $s^*$ and show that it has 0 bumps. The construction algorithm has 2 parts:
\begin{enumerate}
\item ``Expanding Square"
\item ``Sinking $\wedge$'s" (See Figure \ref{fig:odd x odd solution})
\end{enumerate}

\begin{figure}[tb]
    \centering
    \hspace*{\fill}
    \subfigure[Ordering $s^*$ on $G_{5,7}$. 
    The ``expanding square" consists of the red vertices, the ``sinking $\wedge$" is in dark green, and the ``sinking $\vee$" is in light green. The start and end vertices are in blue. We see there must be $\frac{b-a}{2}$ green $\wedge$'s.
    ]
    {
        \includegraphics[height=2.5in]{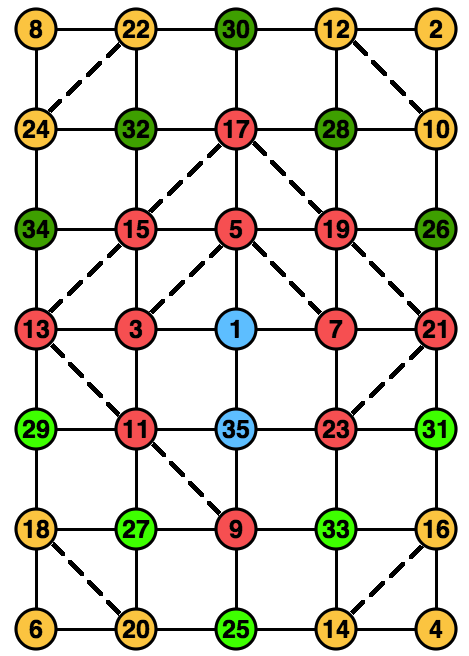}
        \label{fig:odd x odd ordering}
    }
    \hspace*{\fill}
    \subfigure[
    Solution on $G_{5,7}$ using ordering $s^*$ from (a).
    ]
    {
        \includegraphics[height=2.5in]{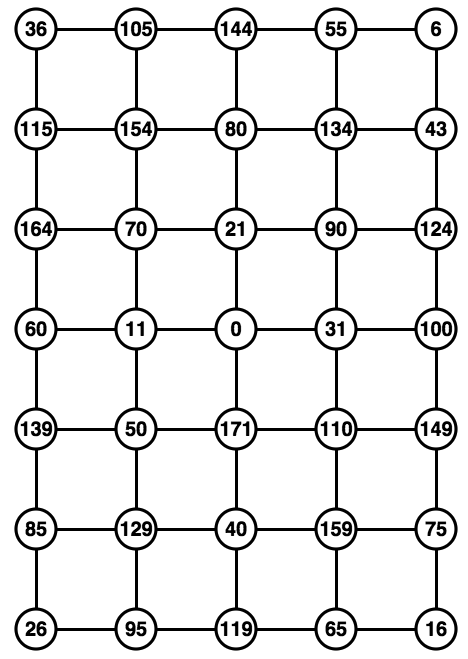}
        \label{fig:odd x odd labeling}
    }
    \hspace*{\fill}
    \caption{
    Optimal ordering $s^*$ on $G_{6,6}$ and solution $f^*$ attaining $\spn(f^*) = \rn(G_{5,7})$.}
    \label{fig:odd x odd solution}
\end{figure}

Throughout this proof, we concentrate on the odd vertices of the ordering, so for convenience $w_j = u_{2j-1}$. Then let $w_1 = (\frac{a+1}{2},\frac{b+1}{2}), w_2 = (\frac{a-1}{2},\frac{b+1}{2}), w_3 = (\frac{a+1}{2},\frac{b+3}{2}), w_4 = (\frac{a+3}{2},\frac{b+1}{2}), w_5 = (\frac{a+1}{2},\frac{b-3}{2})$. Notice that these 5 vertices create a square-like pattern about the center of the grid, except the final vertex is one vertex ``lower" than expected. As shown by red vertices in Figure \ref{fig:odd x odd ordering}, continue ``expanding" the square until all vertices in the $x$-median are chosen. The even vertices, $z_j = u_{2j}$ are chosen on the corners of the grid, with $z_1 = (a,b), z_2 = (a,1), z_3 = (1,1), z_4 = (1,b)$. As shown by yellow vertices in Figure \ref{fig:odd x odd ordering}, the even vertices are then distributed among the corners of the grid, such that $t^+(G_{a,b})$ and $d_{\text{rect}} < \frac{D+1}{2}$.

For part (2) of the algorithm, we concentrate on the even vertices of the ordering, $z_j$. As shown by dark green vertices in Figure \ref{fig:odd x odd ordering}, there are $\frac{b-a}{2}$ $\wedge$ shapes for which vertices have not been chosen. If the last vertex of the ``expanding square" was $w_k = (\frac{a+1}{2}, \frac{b-a}{2})$, let $z_k = (a, b - \frac{a-1}{2})$. Then, choose subsequent $z_{k+1}$ from each $\wedge$ travelling right to left, as shown in Figure \ref{fig:odd x odd ordering}. The corresponding $w_j$ creates a $\vee$, as these are the only vertices such that we attain $t^+(G_{a,b})$ and have 0 bumps. For all remaining $\wedge$'s, we ``sink" by repeating the same $\wedge$ and $\vee$ labeling procedure 1 $y$-coordinate below. Finally, when all $\wedge$'s and $\vee$'s are filled, we choose $u_n = (\frac{a+1}{2},\frac{b-1}{2})$. As $d(s^*) = t^+(G_{a,b})$ and there are no bumps on $s^*$, we have
$$\rn(G_{a,b}) = (ab-1)(a+b-1)-t^+(G_{a,b}) = \frac{a^2b+b^2a-a-b}{2}-ab+2.$$
\end{proof}

\section{Discussion}\label{sec:discussion}
In our paper, we established the exact values of the radio number not only for square grids with equal $x$ and $y$ dimension, but the general grid graph. This improves upon previous studies by Calles and Flores which found the upper and lower bounds $n^3-n^2-2n+4 \le \rn(G_{n,n}) \le n^3 - n^2 + 1$ for $n$ even and $n^3 - n^2 - n+2 \le rn(G_{n,n}) \le n^3 - n^2 - \frac{n-1}{2} + 1$ for $n$ odd on square grids. Our improved result stemmed from our understanding of $t^+(G_{a,b})$, the bump condition on grid graphs, and the subsequent search for orderings $s^*$ to maximize $\sum_{i=1}^{n-1}(d_i-b_i)$.

Whereas previous papers \cite{unpublished-grid, unpublished-ladder, m-ary trees, pathcycle, trees} used a ``sandwiching" method (strengthening upper and lower bounds of $\rn(G)$ until they coincide) to approach the radio number problem, our methodology simplifies the problem by recasting it as an optimization problem in orderings $s^*$. This perspective reduces the problem to considerations of distances $d_i$ that respect the bump condition instead of dealing with distances $d_i$, bumps $b_i$, and labels $f_i$ simultaneously. Furthermore, this methodology allows one to bypass the upper-bound lower-bound process, as finding an ordering $s^*$ that maximizes $\sum_{i=1}^{n-1}(d_i-b_i)$ immediately gives the result for $\rn(G)$.

Furthermore, our results on the grid graph (for example, Figures \ref{fig:even x odd labeling}, \ref{fig:even x even labeling}, and \ref{fig:odd x odd labeling}) provide network planners with a theoretically optimized model to appropriately space broadcast frequencies to most efficiently use a given bandwidth. These optimized frequency assignments should reduce channel collisions and increase the quality and efficiency of wireless networks. Moreover, our results should be more applicable to real-life situations than previous results on paths, cycles, and trees, as networks found in the real-world more closely resemble grids than path or tree structures.


\section{Conclusion}\label{sec:conclusion}
We have completely determined the radio number for all grid graphs $G_{a,b}$ where $a,b>2$, and characterized the $(k-2)$ bump condition on the grid graph. As a step in our proof, we also determined the upper traceable number on all grid graphs.

Using our methodology of determining the radio number of the grid graph, we anticipate extending our results to determine the radio numbers of $n$-dimensional grids, or the radio numbers of the Cartesian product between graphs. We can generalize our findings of the upper traceable number to grids of higher dimensions, and analyze $(k-c)$ bumps to extend the bump condition on these $n$-dimensional grids as well. Furthermore, the motif of isolating distances in the $x$ and $y$ directions can be applied to the Cartesian product of various graphs other than the path graph ($G_{a,b} = P_a \times P_b$ where $\times$ 
is the Cartesian product operator). We conjecture that on an $n$-dimensional grid graph, $t^+(G_{d_1,d_2,\ldots,d_n}) = \sum_{i=1}^{n}\max(d_{d_i}(s))$ if we can suitably place $u_1$ and $u_n$ and at least one of $d_i$ is odd. Several avenues of further research include investigating $n$-dimensional grid graphs, the Cartesian product of cycles, cellular graphs, and modified grid graphs.



\section{Acknowledgements}
I would like to thank my mentor, Dr. Dan Teague, for his guidance and support, and the North Carolina School of Science and Mathematics for sponsoring my research. I would also like to thank Dr. Myra Halpin, Dr. Jonathan Bennett, and Dr. Ming Ya Jiang for proofreading my paper, and Professor Drew Armstrong of the University of Miami and Mr. Tian-Yi Damien Jiang of the Massachusettes Institute of Technology for valuable advice on this paper.
\newpage

\bibliographystyle{amsplain}

\end{document}